\newtheorem{proposition}{Proposition}
\newtheorem{theorem}{Theorem}
\newtheorem{lemma}{Lemma}
\newtheorem{definition}{Definition}
\newcommand\ben{\begin{enumerate}}
\newcommand\een{\end{enumerate}}
\newcommand\bit{\begin{itemize}}
\newcommand\eit{\end{itemize}}
\title{Equilateral Triangle Skew Condition for Quasiconformality}
\author{Colleen Ackermann}
\thanks{C.A. was supported in part by a gift to the Mathematics Department at the University of Illinois from Gene H. Golub.}
\address{Colleen Ackermann, St. Mary's College of Maryland,
Department of Mathematics and Computer Science,
47645 College Drive,
St. Mary's City, Maryland 20686, United States}
\email{ctackermann@smcm.edu}
\author{Peter Ha\"{i}ssinsky}
\thanks{P.H. is partially supported by the  ANR projects ``GDSous/GSG'' no. 12-BS01-0003-01
and ``Lambda'' no. 13-BS01-0002}
\address{Peter Ha\"{\i}ssinsky, Universit\'e Paul Sabatier,
Institut de Math\'ematiques de Toulouse,
118 route de Narbonne,
31062 Toulouse Cedex 9, France}
\curraddr{
Aix Marseille Univ, CNRS, Centrale Marseille, I2M, Marseille, France}
\email{phaissin@math.cnrs.fr}
\author{Aimo Hinkkanen}
\address{Aimo Hinkkanen, University of Illinois at Urbana-Champaign,
Department of Mathematics,
1409 W. Green Street,
Urbana, Illinois 61801, United States}
\email{aimo@math.uiuc.edu}
\date{\today}
\begin{document}

\maketitle

\begin{abstract} We characterize quasiconformal mappings in terms of the distortion of the vertices of
equilateral triangles.\end{abstract}

\section{Introduction}

Since quasiconformal mappings were first studied nearly a century ago, many diverse characterizations have been discovered.  These have led to a wide variety of applications in many fields including Teichm\"{u}ller theory, elliptic PDE's, hyperbolic geometry  and complex dynamics.  For an overview of these applications and the theory of quasiconformal mappings see \cite{MR0200442}, \cite{MR2472875}, and \cite{MR2245223}.  In this paper we will use the metric definition of quasiconformality to obtain a new formulation for the definition of planar quasiconformal mappings.  

\begin{definition}\label{metricdefn}
Let $f:U\to V$ be a homeomorphism between planar domains.  When $D(z,r)=\{w\in\mathbb{C}:|z-w|\leq r\}\subset U$, define
$$ M(z,r)=\sup \{|f(z)-f(w)| : |z-w|=r\}, \text{ and}$$
$$ m(z,r)=\inf \{|f(z)-f(w)| : |z-w|=r\}.$$ 
Suppose that $K\geq 1$. 
The mapping $f$ is said to be $K$-quasiconformal if 
$$H(z)=\limsup_{r\to 0}\frac{M(z,r)}{m(z,r)}\leq K$$
for a.e.~$z\in U$, and if $H(z)$ is bounded in $U$.
\end{definition}

In his book \cite{MR2245223} John Hubbard obtained a new characterization of quasiconformal mappings.  Let $T$ be a closed topological triangle with specified vertices, 
$L(T)=\max\{ |a-b| : a,b \text{ are vertices of } T\}$ and $l(T)=\min\{ |a-b| : a,b \text{ are vertices of } T\}$.  We define
$${\hbox{\rm skew}}(T)=\frac{L(T)}{l(T)}.$$  
Note that $f(T)$ is also a topological triangle so the expression ${\hbox{\rm skew}}(f(T))$ is defined.  
Then $f:U\to V$ is quasiconformal provided that there exists an increasing homeomorphism $h:[0,\infty)\to [0,\infty)$ such that
$$ {\hbox{\rm skew}}(f(T))\leq h({\hbox{\rm skew}}(T))$$
for all closed Euclidean triangles $T\subset U$.
In fact, Hubbard showed that it is sufficient to only consider triangles with skew bounded above by $\sqrt{7/3}$.  He then asked the question of whether it suffices to only consider equilateral triangles.  Progress was made on this problem in a previous paper by Javier Aramayona and Peter Ha\"{i}ssinsky \cite{MR2436734} in which they showed that  there exists a constant $\epsilon_0>0$ such that if $\epsilon\in[0,\epsilon_0)$ and
$${\hbox{\rm skew}}(f(T))\leq 1+\epsilon$$
for all equilateral triangles $T\subset U$, then $f$ is quasiconformal.

Theorem \ref{thm:main} of this paper answers Hubbard's question in the affirmative. 

\begin{theorem}\label{thm:main} Let $U$ and $V$ be two domains in the complex plane $\mathbb{C}$, and
let $f: U \to V$ be an orientation-preserving homeomorphism.  For each $\sigma\geq 1$ there exists $H(\sigma)\geq 1$ with the following property.  If 
${\hbox{\rm skew}}(f(T))\leq \sigma$ for all equilateral triangles $T\subset U$,
then, 
for every $z\in U$ and every  $r< {\mbox{\rm dist}}(z,{\mbox{$\mathbb C$}}\setminus U)$,  the inequality  $M(z,r) \le H m(z,r)$  holds where $H=H(\sigma)$. In particular, the map  $f$ is quasiconformal.
\end{theorem}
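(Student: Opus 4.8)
The plan is to show that a bound on the skew of images of equilateral triangles gives uniform control on the ratio $M(z,r)/m(z,r)$ for a fixed circle of radius $r$, with a constant depending only on $\sigma$. The main idea is geometric: if the skew bound holds, then the image of an equilateral triangle cannot be too degenerate, and by placing many equilateral triangles around a circle $\{w : |z-w| = r\}$ I can propagate control on the relative sizes of the images of arcs all the way around the circle. Throughout I would normalize by fixing $z$ and $r$ and studying the restriction of $f$ to the circle $S = \{w : |z-w| = r\}$ together with nearby points.

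\medskip

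First I would establish a \emph{local comparison} lemma: if $p, q, s$ are the vertices of an equilateral triangle in $U$, then the hypothesis $\skew(f(T)) \le \sigma$ means the three image side lengths $|f(p)-f(q)|$, $|f(q)-f(s)|$, $|f(s)-f(p)|$ all lie within a factor of $\sigma$ of one another. In particular, for two points at distance $r$ on the circle whose separation equals the side of a small equilateral triangle, the image distances are comparable. The natural first step is therefore to cover the circle $S$ by a chain of overlapping equilateral triangles of a common side length comparable to $r$, each sharing a vertex or edge with the next, so that the comparability of image side lengths can be chained around the loop.

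\medskip

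Next I would turn this chain into a bound on $M/m$. The quantities $M(z,r)$ and $m(z,r)$ are the largest and smallest values of $|f(z) - f(w)|$ as $w$ ranges over $S$; to compare them I would compare $|f(z)-f(w_1)|$ and $|f(z)-f(w_2)|$ for arbitrary $w_1, w_2 \in S$. The strategy is to build a bounded-length chain of equilateral triangles connecting $z$ to $w_1$ and to $w_2$ (for instance triangles with one vertex at $z$ and the opposite edge a chord of a concentric circle), so that the image of the segment from $f(z)$ to $f(w_i)$ is controlled by a product of finitely many skew factors, the number of triangles in the chain being bounded independently of $r$. Summing or multiplying the resulting comparisons yields $M(z,r) \le H(\sigma)\, m(z,r)$ with an explicit $H(\sigma)$ built from $\sigma$ and the (universal) number of triangles used. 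I would then invoke Definition~\ref{metricdefn}: a uniform bound on $M(z,r)/m(z,r)$ for all admissible $r$ forces $H(z) = \limsup_{r\to 0} M(z,r)/m(z,r) \le H(\sigma)$ everywhere, giving $K$-quasiconformality with $K = H(\sigma)$.

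\medskip

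The hard part, I expect, will be controlling the \emph{accumulation of the skew factor} along the chain: a naive chaining argument multiplies a factor of $\sigma$ for each triangle, and since finer circles may seem to require more triangles, one must argue that a \emph{bounded} number of equilateral triangles of side comparable to $r$ suffices to connect any two points of $S$ and to join $z$ to $S$, so that the final constant $H(\sigma)$ does not blow up as $r \to 0$. Equivalently, the genuine difficulty is ruling out a slow geometric drift in which each triangle is only mildly distorted yet the cumulative distortion around the circle is large; this is exactly where the rigidity of the equilateral configuration — and presumably a scaling/compactness or monotonicity argument — must be exploited to keep the number of comparison steps uniform in $r$.
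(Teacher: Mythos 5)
There is a genuine gap, and it sits exactly where the whole difficulty of Hubbard's question lies: your chains of \emph{prescribed} equilateral triangles can never produce a \emph{lower} bound on $m(z,r)$. Two concrete problems. First, the only equilateral triangles that directly compare radial distances $|f(z)-f(w)|$, $|f(z)-f(w')|$ are those with a vertex at $z$, which forces $w,w'$ to lie on the same circle at angular distance exactly $\pi/3$; so chaining through $z$ only relates the values of $w\mapsto|f(z)-f(w)|$ at the six points $we^{ik\pi/3}$, and the maximizing and minimizing points of this function on $C(z,r)$ are in general at an angular separation that is not a multiple of $\pi/3$, hence never get compared. Second, and more fundamentally, comparability of the image lengths of all edges in any fixed finite configuration of triangles does not prevent the Jordan curve $f(C(z,r))$ from dipping arbitrarily close to $f(z)$ \emph{between} the controlled vertices: $f$ is only a homeomorphism, the skew hypothesis constrains it at vertex triples, and the triangle inequality gives upper bounds $|f(z)-f(w)|\le\sum(\text{edge images})$ but nothing from below. (Your closing worry, that the number of triangles might grow as $r\to 0$, is a misdiagnosis: the hypothesis is scale invariant, so the count is automatically uniform in $r$; boundedness of the chain is the easy part, not the hard part.)

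What is actually needed -- and what the paper spends Propositions \ref{prop:main1} and \ref{prop:main2} and Lemmas \ref{lem:lowerbound}--\ref{lem:curve} proving -- is that the image of an equilateral triangle $T$ contains a disk of radius $\alpha(\sigma)L(f(T))$ centered at $f(p)$ for a well-chosen interior point $p$; the lower bound $m(0,1)\ge \alpha L(f(T_2))$ then comes from the fact that $f(C(0,1))$ is disjoint from $f(T_2)$ and hence avoids that disk, and the theorem follows from just two triangles $T_1$ and $T_2=A(T_1)$. The disk containment itself cannot be reached by comparing prescribed vertex configurations: the paper takes the preimage $\gamma$ of the straight segment from $f(p)$ to the nearest boundary point of $f(T)$, rotates pieces of $\gamma$ by $\pm\pi/3$ about $p$ and about auxiliary points to build a curve $\gamma'$ on which $|f(\cdot)-f(p)|\le \sigma\mu(1+2\sigma^3)$ with $\mu=\dist(f(p),\C\setminus f(T))$, and then uses a topological intersection argument (the curve $\gamma_4$ must meet its own rotation $\bar R_q(\gamma_4)$, via Lemma \ref{lem:geo}) to produce an equilateral triangle whose two vertices lie \emph{on these preimage curves} at locations one cannot prescribe in advance; combining this with the tiling chain estimate $|f(p)-f(q)|\ge cL(f(T))$ of Lemma \ref{lem:lowerbound} forces $\mu\gtrsim L(f(T))$. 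That device -- applying the skew hypothesis to triangles whose vertices are found on curves by a topological argument, rather than placed by hand -- is precisely the mechanism that rules out the ``slow geometric drift'' you flag at the end, and it is absent from your outline; without it, the argument cannot close.
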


Our next result, Theorem \ref{cor:1}, serves two purposes.  On the one hand, the value $H$ from Theorem \ref{thm:main}
gives an upper bound for the dilatation of $f$.  However, this estimate is far from optimal.  We may use the fact that quasiconformal
mappings  are differentiable almost everywhere to obtain a sharp bound on the dilatation in terms of $\sigma$. On the other hand,
quasiconformality is a local property and demanding, as in Theorem \ref{thm:main}, a uniform bound on every equilateral triangle is unnecessarily strong.
 
To show that $f$ is quasiconformal in  $U$, it suffices to show that $f$ satisfies two conditions.  First we must show  
that each $z\in U$ has a neighborhood $W$ such that ${\hbox{\rm skew}}(f(T))$ is bounded for all equilateral triangles $T\subset W$. 
If the resulting upper bounds vary, this is not  enough to get the result that $f$ is quasiconformal in $U$. However, by formulating
a new concept of skew that needs to be satisfied only almost everywhere and must be bounded uniformly in $U$, and combining it with
the assumption that the skew as defined previously is bounded, not necessarily uniformly, in some neighborhood of each point of $U$,
we obtain both quasiconformality and a better upper bound for the maximal dilatation of $f$. 
The details of how to define this new notion of skew are given below.

First note that quasiconformal maps are differentiable almost everywhere by Mori's theorem \cite{MR0083024}. 
Let ${\hbox{\rm Skew}}(f)$ denote the supremum of ${\hbox{\rm skew}}(f(T))$ over all equilateral triangles $T$ contained in $U$. 
Next, for $z\in U$ and $r>0$, let ${\hbox{\rm skew}}(f,z,r)$ 
denote the least upper bound of ${\hbox{\rm skew}}(f(T))$ over all equilateral triangles $ T\subset \{w\in U:|z-w|< r\}$.  Set ${\hbox{\rm skew}}(f,z)=\liminf_{r\to 0} {\hbox{\rm skew}}(f,z,r)$ and
${\hbox{\rm skew}}(f)=|| {\hbox{\rm skew}} (f,z)||_{\infty}$. Below, the assumption that ${\hbox{\rm Skew}}(f|W)$ is finite means that there is {\sl some} upper bound for ${\hbox{\rm skew}}(f(T))$ over all equilateral triangles $T$ contained in the appropriate domain $W$.

\begin{theorem}\label{cor:1} Let $U$ be a domain in the complex plane $\mathbb{C}$, and
let $f: U \to f(U)$ be an orientation-preserving homeomorphism.
Suppose that each $z\in U$ has a neighborhood $W$ such that ${\hbox{\rm Skew}}(f|W)$
is finite. 

Suppose that $\sigma \geq 1$.  If ${\hbox{\rm skew}}(f)\leq \sigma$  then $f$ is $K(\sigma)$-quasiconformal where
$$K(\sigma)=\frac{\sigma^2-1+\sqrt{\sigma^4+\sigma^2+1}}{\sqrt{3}\sigma}\,.$$
In particular, if ${\hbox{\rm skew}}(f)=1$  then $f$ is a conformal mapping. The upper bound $K(\sigma)$ for the maximal dilatation of $f$ is the best possible and is attained at least for certain affine mappings. 
\end{theorem}

\section{Proof of the Main Theorem}

Throughout the rest of the paper we will use the following notation and conventions:
\begin{enumerate}
\item We define $D(z,r)=\{w\in\mathbb{C}:|z-w|\leq r\}$ and let $C(z,r)$ be the boundary of $D(z,r)$.  
\item By a curve we mean the image of a not necessarily one-to-one continuous function from a closed interval into $\mathbb{C}$.
\item All triangles (without the qualifier ``topological'') will be closed Euclidean triangles.
\item Let $\mathcal{F}_\sigma$ denote the set of orientation-preserving homeomorphisms of any domain $U\subset\mathbb{C}$ into any domain $V\subset\mathbb{C}$ such that ${\hbox{\rm skew}}(f(T))\leq \sigma$ for all closed equilateral triangles $T\subset U$.
\end{enumerate}

Our strategy for the proof of   Theorem \ref{thm:main} 
is to show first that  the image of every equilateral triangle, $T$, contains a disk with radius proportional to $L(f(T))$. To do this, we naturally need the assumption of Theorem \ref{thm:main} for all equilateral triangles (at least in a suitable set containing this $T$) and not only for this particular $T$. Once it is known that the images of equilateral triangles are ``thick'' in this sense, it is easier to  obtain the quantitative estimates that are required to prove that $f$ is quasiconformal, whether using the metric (Definition \ref{metricdefn}) or the analytic (Definition \ref{def2} in Section \ref{ansection} below) definition of quasiconformality.

The most difficult part of the proof that the image $f(T)$ is thick is to construct another equilateral triangle with suitable additional properties, to which the assumption of  Theorem \ref{thm:main}  can then be applied. This will be done in the proof of Proposition \ref{prop:main2} below. That proof requires delicate geometric considerations. After these strategic comments, let us now move to the proof.

The proof of Theorem \ref{thm:main} relies on the following proposition.

\begin{proposition}\label{prop:main1} 
Let $U$ be a domain containing $D(0,1)$, let $f:U\to\mathbb{C}$ belong to ${\mathcal F}_\sigma$, 
and let $T$ be the triangle with vertices $0$, $1$, and $\omega=1/2+(\sqrt{3}/2)i$.
Then there exists a disk $D$ contained in $f(T)$ such that
\begin{enumerate}
\item $D$ is centered at $f(p)$ where $p=1/2+(85\sqrt{3}\cdot 2^{-9})i\approx 0.5+0.29i$, and
\item there exists a constant $\alpha=\alpha(\sigma)$ such that the radius of $D$ is at least $\alpha L(f(T))$.
\end{enumerate}
\end{proposition}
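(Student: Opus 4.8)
The plan is to use the skew hypothesis on the family of equilateral triangles produced by repeatedly bisecting $T$, turning the resulting control on vertex distances into a genuine separation of $f(p)$ from $\partial f(T)$. The engine is the following elementary consequence of membership in $\FFF_\sigma$: for every equilateral $T'\subset U$ one has $L(f(T'))\le\sigma\, l(f(T'))$, so the three image vertices form a \emph{fat} Euclidean triangle whose angles and whose vertex-to-opposite-side distances are bounded below in terms of $\sigma$. First I would subdivide $T$ by the standard midpoint rule into four equilateral triangles of half the side length and iterate this $n$ times. Chaining the skew inequality across adjacent sub-triangles --- two neighbours share a full edge, and each has $\sigma$-comparable image sides --- a short induction shows that the images of all edges at level $n$ have pairwise comparable lengths, with a constant $C(\sigma)$ depending only on $\sigma$ and on the fixed depth $n$; in particular every level-$n$ edge image has length comparable to $L(f(T))$.

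Next I would exploit the explicit location of $p$. Its barycentric weights relative to $0,1,\om$ are $(171,171,170)/512$, the dyadic approximant of the centroid at denominator $2^{9}$; for the appropriate depth $n$ this is an interior vertex of the level-$n$ subdivision that is surrounded by several nested rings of sub-triangles all lying well inside $T$. The images of these rings are nested topological annuli encircling $f(p)$, and any path from $f(p)$ to $\partial f(T)$ must cross each of them. Combining the comparability estimate with the fatness of the point-triangles, I would bound below the Euclidean width of this band of rings, so that $\dist(f(p),\partial f(T))\ge\alpha L(f(T))$ for some $\alpha=\alpha(\sigma)>0$; the disk $D=D(f(p),\alpha L(f(T)))$ then proves the proposition.

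The hard part will be the passage from metric control of \emph{finitely many} image vertices to a lower bound on the distance from $f(p)$ to the boundary \emph{curve} $f(\partial T)$. The skew condition says nothing about interior points of the sides, so the curved image edges could a priori bulge back toward $f(p)$, and a single ring of sub-triangles may consist of thin slivers that can be crossed with arbitrarily small displacement. The resolution I would pursue is multiscale: each image arc is itself a union of the smaller image edges of the next subdivision level, whose oscillation is controlled inductively by the same comparability estimate, and the separating wall is taken several rings thick rather than one, so that crossing it costs a definite fraction of $L(f(T))$ even when individual image triangles are slim. Making the number of rings and the depth $n$ depend only on $\sigma$, and quantifying the taming of the arcs' oscillation, is the crux of the argument.
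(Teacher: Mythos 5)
Your first step---chaining the skew inequality over the level-$n$ subdivision of $T$---is sound, but only in one direction, and this is where the argument breaks. Chaining across adjacent sub-triangles does give pairwise comparability of the image lengths of tiling edges, with a constant $\sigma^{N}$ where $N$ is the length of a connecting chain, and the triangle inequality along the sides of $T$ then yields the \emph{lower} bound $|f(v)-f(w)|\ge L(f(T))/(N\sigma^{N})$ for every tiling edge $[v,w]$; this is exactly Lemma \ref{lem:lowerbound} of the paper. But your ``in particular every level-$n$ edge image has length comparable to $L(f(T))$'' also asserts an \emph{upper} bound, and that does not follow: the triangle inequality bounds sums of edge-image lengths from below, never from above, and nothing in the finitely many vertex constraints prevents the images of the interior subdivision vertices from lying at enormous distance from $f(0),f(1),f(\omega)$ while all edge images remain pairwise comparable. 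An upper bound of this kind is essentially the quasisymmetry of $f$, i.e.\ the conclusion of the theorem; it cannot serve as an input.

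The same problem defeats the multiscale fix you propose for the step you rightly call the crux. The skew hypothesis constrains only vertices, so at every finite depth the curves joining controlled image vertices are completely unconstrained; passing to depth $k$ only produces more controlled vertices, with comparability constants that degrade like $\sigma^{c4^{k}}$, so the oscillation bounds you would need to sum over scales do not exist---they would require the missing upper bounds, and with decay in $k$ besides. Concretely, the boundary curves of your image annuli can pass arbitrarily close to $f(p)$ without violating any finite set of vertex-distance constraints, since lower bounds on edge images do not prevent the image polygons from folding back; hence no wall of rings, however thick, has image width bounded below by vertex data alone. This is why the paper proceeds in the opposite direction: rather than controlling images of domain curves by $L(f(T))$, it pulls back the straight segment $[f(p),f(p')]$, where $p'$ realizes $\mu=\dist(f(p),\C\setminus f(T))$, so that every point of the resulting domain curve has image within $\mu$ of $f(p)$ \emph{by construction}; rotating this curve about $p$ by $\pm\pi/3$ preserves that control up to a factor $\sigma$ (skew applied to triangles with apex $p$); a topological intersection argument (Lemma \ref{lem:geo} and Step 3 of Lemma \ref{lem:curve}) produces an equilateral triangle with apex $q=p+2^{-9}$ and two vertices on the curve; and the chaining lower bound $|f(p)-f(q)|\ge cL(f(T))$ of Lemma \ref{lem:lowerbound} then forces $\mu\ge\alpha(\sigma)L(f(T))$ via Proposition \ref{prop:main2}. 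Any repair of your approach would need a device of this kind, converting control \emph{in the image} into curves \emph{in the domain}, rather than the reverse.
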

We note that if $f$ is to be quasiconformal, then, certainly, the image $f(T)$ has to contain a disk of a definite size
 centered at the image of the centroid of the triangle, i.e., the point $\xi= 1/2 + (\sqrt{3}/6)i$. Unfortunately, 
its arithmetic properties make it difficult to relate this point $\xi$ to
the vertices of $T$ using equilateral triangles. The point $p$ was chosen, because it is both close to the centroid
 ($|\xi-p|= \sqrt{3}/(2^9\cdot3)$), and it is a vertex of a tiling of the plane by equilateral triangles whose vertices include the vertices of
 $T$.  Indeed, we have 
$ p=  1/2- 85\cdot 2^{-9} + 85\cdot 2^{-8}\omega$, cf.\ Lemma \ref{lem:lowerbound}. We finally observe that $p$ is closer to the horizontal side of $T$ than $\xi$.

\medskip

We first derive the proof of Theorem \ref{thm:main} from Proposition \ref{prop:main1}. We will then focus on the proof of the latter.

\begin{proof}[Proof of Theorem \ref{thm:main}]

Fix $z\in U$ and $r>0$.  If $D(z,r)\subset U$, let $M(z,r)=\max \{|f(z)-f(w)|: w\in C(z,r)\}$ and $m(z,r)=\min \{|f(z)-f(w)|: w\in C(z,r)\}$.  
Denote by $z_M$ a point in $C(z,r)$ such that $|f(z_M)-f(z)|=M(z,r)$.

Since ${\mathcal F}_\sigma$ is invariant under pre- and post-composition by affine maps of the form $z\mapsto az+b$ where $a,b\in {\mathbb C}$ and $a\not= 0$, we assume that $z=0$,  $r=1$, and $z_M=1$. Thus $|f(0)-f(1)| = M(0,1)$. Let $T_1$ be the equilateral triangle with vertices $0$, $1$, and $\omega$.   Since $0$ and $1$ are vertices of $T_1$, we have $|f(0)-f(1)| \leq L(f(T_1))$. Hence $M(0,1)  \leq L(f(T_1))$.
  
By Proposition \ref{prop:main1}, the image of $T_1$ must contain a disk $D$ centered at $f(p)$ and of radius at least $\alpha L(f(T_1))$.  

Let us consider the isometry $A(z)= \overline{z-p}$. Let $T_2=A(T_1)$ (see Figure~\ref{CircleAndTriangle}).  The triangle $T_2$ is contained in the unit disk, and $A$ maps $p$
to $0$ and $1$ to $p$.  Since the vertices of $T_2$ other than $p$ lie outside of $T_1$, their images lie  outside of $D$, and so we have $L(f(T_2))\ge  \alpha L(f(T_1))$.

Moreover, another application of Proposition \ref{prop:main1} implies that $f(T_2)$ contains the disk $D(f(0),\alpha L(f(T_2)))$. Since all the points of $f(C(0,1))$ are outside the interior of $f(T_2)$ and thus also outside the interior of $D(f(0),\alpha L(f(T_2)))$, it follows that $m(0,1)\geq \alpha L(f(T_2))$.

Summing up these estimates, we obtain
$$m(0,1)\ge \alpha L(f(T_2)) \ge \alpha^2 L(f(T_1)) \ge  \alpha^2 M(0,1)\,.$$
\end{proof}

\begin{figure}[h] 
\centering
 \begin{picture}(250,250)
 \put(150,150){${\bf T_1}$}
 \put(125,120){${\bf T_2}$}
 \put(210,113){${\bf z_M}$}
\put(0,0){\includegraphics[width=3 in]{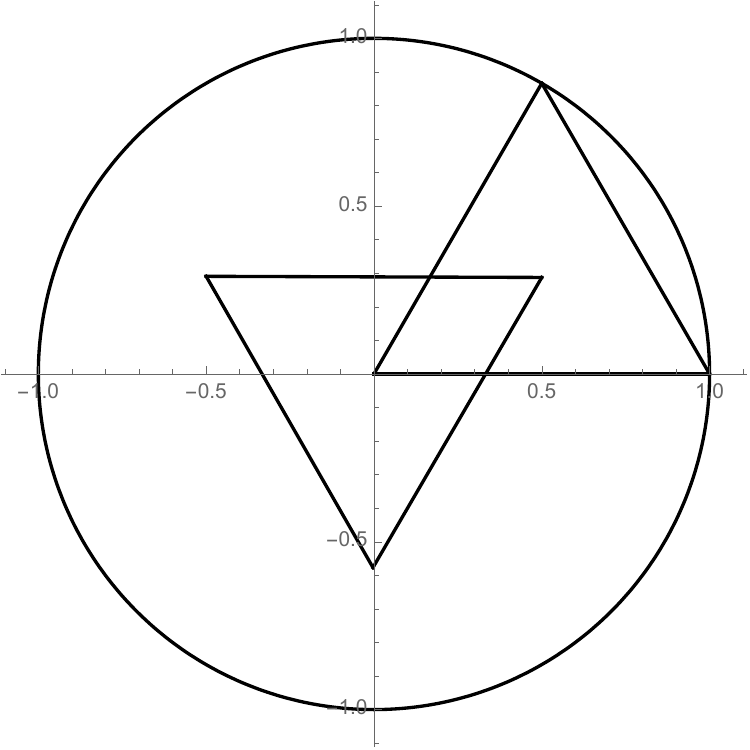}}
\end{picture}
\caption{Configuration of $C(0,1)$, $T_1$ and $T_2$}
\label{CircleAndTriangle}
\end{figure}

\section{Construction of Certain Triangles}

Proposition \ref{prop:main1} is a consequence of the following proposition.
\begin{proposition}\label{prop:main2} 
Let $U$ be a neighborhood of $D(0,1)$, and let $f:U\to\mathbb{C}$ be a homeomorphism onto its image such that $f\in{\mathcal F}_\sigma$. 
Let $T$ be the closed triangle with vertices $0$, $1$, and $\omega=1/2+(\sqrt{3}/2)i$. Let $q=p +2^{-9}$.
Then there exist points $t_1,t_2\in T$ such that  the points $q, t_1, t_2$ form the vertices of an equilateral triangle and
the inequalities 
$|f(t_j)-f(p)|\leq C \mu$, and $|f(p)-f(q)|\geq c L(f(T))$ hold
for some constants $c=c(\sigma)$ and  $C=C(\sigma)$ where $\mu= {\mbox{\rm dist}}(f(p),{\mbox{$\mathbb C$}}\setminus f(T))$.  We permit the trivial triangle where we have $t_1=t_2=q$.
\end{proposition}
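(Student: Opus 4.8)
The statement splits into two essentially independent assertions about the fixed configuration: a lower bound $|f(p)-f(q)|\ge c\,L(f(T))$ comparing the image of the short lattice edge $[p,q]$ to the diameter of $f(T)$, and the existence of an equilateral triangle $(q,t_1,t_2)$ with $t_1,t_2\in T$ whose non-apex vertices map into $D(f(p),C\mu)$. The common thread is that $0,1,\om,p,q$ all lie on the triangular lattice $2^{-9}(\Z+\Z\om)$, so all of $T$ is tiled by equilateral triangles of side $2^{-9}$, on each of which $f$ has skew at most $\sigma$; the plan is to exploit this tiling throughout.

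For the lower bound I would run a dyadic subdivision. Write $\Delta_0=T$ and subdivide repeatedly into four congruent equilateral triangles by joining edge-midpoints, so that after $9$ steps every vertex is a point of $2^{-9}(\Z+\Z\om)$ and $p$ is one of them. I would descend along a nested chain $\Delta_0\supset\Delta_1\supset\cdots\supset\Delta_9$ with $p\in\Delta_k$ for every $k$ (the binary digits of $85$ prescribe the choice, as in the Remark). For an equilateral triangle $\Delta$ let $D(\Delta)$ denote the largest distance between images of its three vertices; the skew bound forces the smallest such distance to be at least $D(\Delta)/\sigma$. When $\Delta$ is halved, its longest image edge splits at a midpoint, so one half lies in a corner child and has image length at least $D(\Delta)/2$; since the four children are pairwise edge-adjacent within two steps and a shared edge transfers the estimate with loss $\sigma$, every child $\Delta'$ satisfies $D(\Delta')\ge D(\Delta)/(2\sigma^2)$. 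Iterating nine times gives $D(\Delta_9)\ge L(f(T))/(2\sigma^2)^9$, and since $[p,q]$ is an edge of a smallest triangle lying a bounded number of adjacency steps from $\Delta_9$, I would conclude $|f(p)-f(q)|\ge c(\sigma)\,L(f(T))$.

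For the triangle $(q,t_1,t_2)$ I would argue by a continuity and dichotomy scheme. Consider the one-parameter family of equilateral triangles with apex $q$, degenerating to $t_1=t_2=q$ at one end and sweeping its base across a neighborhood of $p$ as the parameter grows, and track the continuous quantity $\Phi=\max_j|f(t_j)-f(p)|$. Either $\Phi$ drops to at most $C\mu$ for some admissible parameter, giving the desired non-trivial triangle with both non-apex vertices in $D(f(p),C\mu)$ and inside $T$, or it never does, in which case I would show the sweep forces $|f(q)-f(p)|\le C\mu$ and take the permitted trivial triangle $t_1=t_2=q$. The input from $f\in\FFF_\sigma$ here is quantitative control of the shape of the pullback $\Omega=f^{-1}(D(f(p),\mu))$ of the maximal inscribed disk: because every small equilateral triangle near $p$ has skew at most $\sigma$ under $f$, $\Omega$ cannot be too eccentric, which is what lets the swept base vertices be captured inside $D(f(p),C\mu)$ with a constant $C$ depending only on $\sigma$. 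The orientation-preserving homeomorphism property guarantees that the relevant image sets are Jordan domains, so the intermediate value step is legitimate.

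The main obstacle is this triangle construction, and specifically turning the qualitative skew bound into the effective constant $C(\sigma)$: the equilateral constraint pins the base vertices at distance comparable to $2^{-9}$ from $p$, so one must know that $\Omega$ reaches that far in the two base directions relative to its in-radius $\mu$, with a loss depending only on $\sigma$. This is exactly where the near-centroid placement of $p$, the small fixed gap $|p-q|=2^{-9}$, and the fact that $\partial\Omega$ meets $\partial T$ all have to be combined; by contrast the lower bound is routine once the subdivision is set up. Finally, the two assertions feed Proposition \ref{prop:main1}: from the equilateral triangle $(q,t_1,t_2)$ and the skew bound one gets $|f(t_1)-f(t_2)|\le 2C\mu$, hence $|f(q)-f(t_1)|\le 2\sigma C\mu$ and $|f(q)-f(p)|\le C(2\sigma+1)\mu$, which against $|f(p)-f(q)|\ge c\,L(f(T))$ yields the inscribed disk radius $\mu\ge\alpha\,L(f(T))$ with $\alpha=c/\big(C(2\sigma+1)\big)$.
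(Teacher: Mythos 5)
Your first assertion, the lower bound $|f(p)-f(q)|\ge c\,L(f(T))$, is proved correctly: the dyadic subdivision, the observation that one corner child inherits an image edge of length at least $D(\Delta)/2$, and the transfer across shared edges with loss $\sigma$ per adjacency step are all sound, and this route even yields a sharper constant (roughly $\bigl((2\sigma^2)^{9}\sigma^{O(1)}\bigr)^{-1}$) than the paper's Lemma \ref{lem:lowerbound}, which chains through all $N=2^{18}$ tiles of $2^{-9}\Lambda$ and loses a factor $N\sigma^{N}$. (You should add the small observations that $p$, being a point of $2^{-9}\Lambda$, is automatically a vertex of $\Delta_9$, and that the six level-$9$ triangles around $p$ lie inside $T$, so the final transfer to a triangle having $[p,q]$ as an edge is legitimate.)

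The construction of the triangle $(q,t_1,t_2)$, however, has a genuine gap, and it sits exactly at the crux of the proposition. Your argument rests on the claim that the pullback $\Omega=f^{-1}(D(f(p),\mu))$ ``cannot be too eccentric'' with constants depending only on $\sigma$, i.e.\ that the set $\{|f-f(p)|\le C\mu\}$ reaches out from $p$ to distance comparable to $2^{-9}$ in the two base directions of your sweep. No proof of this is offered, and none is available at this stage: the hypothesis $f\in\FFF_\sigma$ constrains only triples of points that form equilateral triangles and gives no direct control on the shape of the preimage of a disk. Indeed, ``preimages of disks are quasi-round'' is essentially the metric distortion statement the whole paper is trying to establish, so invoking it here is circular; the second horn of your dichotomy (``the sweep forces $|f(q)-f(p)|\le C\mu$'') is likewise asserted without argument, so the intermediate-value scheme cannot be closed. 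The paper's proof is designed precisely to avoid any control of $\Omega$: it controls the images of explicitly constructed curves instead. It takes the preimage $\gamma$ of the segment $[f(p),f(p')]$ realizing $\mu$, on which $|f-f(p)|\le\mu$ holds for free; rotates its initial piece by $\pm\pi/3$ about $p$, so that each rotated point forms an equilateral triangle with its preimage and $p$ and the bound degrades only to $\sigma\mu$; extends the resulting curve beyond $C(p,1/4)$ by further rotations about its endpoints (cost: a factor $1+2\sigma^3$); and finally produces $t_1,t_2$ by a topological crossing argument: by Lemma \ref{lem:geo} the rotation $\bar R_q$ by $-\pi/3$ about $q$ separates the endpoints of this curve on $\partial D_q$, so the rotated curve must intersect the original one, and the intersection point, its preimage under $\bar R_q$, and $q$ form the required equilateral triangle with both non-apex vertices on the controlled curve. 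Some device of this kind --- forcing an intersection of a controlled curve with its own rotation about $q$ --- is what your proposal is missing, and without it the key existence claim does not follow.
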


\begin{proof} [Proof of Proposition \ref{prop:main1} assuming Proposition \ref{prop:main2}]
If $t_1=t_2=q$, then we have
$$ c L(f(T))\leq |f(p)-f(q)|\leq C\mu.$$
Otherwise, by the triangle inequality we have
\begin{equation} \label{tri1}
|f(p)- f(q)| \leq  |f(t_1)-f(p)| + |f(t_1)- f(q)|  .
\end{equation}
Since $f \in {\mathcal F}_{\sigma}$ and $t_1,t_2,q$ are the vertices of an equilateral triangle, we obtain
\begin{equation} \label{tri2}
 |f(t_1)- f(q)|\leq \sigma  |f(t_1)-f(t_2)|  .
\end{equation}
By Proposition \ref{prop:main2}, we have
$ |f(t_1)-f(p)|  \leq C \mu$ and $ |f(t_2)-f(p)|  \leq C \mu$, so that by this and the triangle inequality we get
$$
|f(t_1)-f(t_2)|  \leq |f(t_1)-f(p)|  +   |f(t_2)-f(p)| \leq 2 C \mu  .
$$
By Proposition \ref{prop:main2}, we have $|f(p)-f(q)|\geq c L(f(T))$. Using this and appealing to $ |f(t_1)-f(p)|  \leq C \mu$ again, we obtain from (\ref{tri1}) and (\ref{tri2}) that
$$
c L(f(T)) \leq |f(p)-f(q)| \leq C \mu + \sigma ( 2 C \mu)  ,
$$
whence
$$
 \mu \ge \frac{c}{(2\sigma+1)C}L(f(T))\,.
 $$
 \end{proof}

\section{Proof of Proposition \ref{prop:main2}}
  
Let the assumptions of  Proposition \ref{prop:main2} be satisfied. Recall that we write $\mu= {\mbox{\rm dist}}(f(p),{\mbox{$\mathbb C$}}\setminus f(T))$. The idea of our proof of Proposition \ref{prop:main2} is to define a curve  $\gamma'$ in $T$ going through $p$ such that
\begin{enumerate}
\item  for all $t\in\gamma'$ we have $|f(t)-f(p)|\leq  \sigma \mu (1+2\sigma^3)$;
\item  there are two points $t_1,t_2\in \gamma'$, such that  $q, t_1, t_2$ form the vertices of an equilateral triangle.
\end{enumerate}
The proof of Proposition \ref{prop:main2}  results from Lemma \ref{lem:lowerbound} and Lemma \ref{lem:curve}.

We first prove the following result. Recall that $\omega = 1/2 + (\sqrt{3}/2) i$. 

\begin{lemma}\label{lem:lowerbound} Let $T$  be the closed triangle with vertices $0$, $1$, and $\omega$. Let  $p =1/2+(85\sqrt{3}\cdot 2^{-9})i$ and  $q=p +2^{-9}$.
Then 
$|f(q)-f(p)|\geq c L(f(T))$
for some positive constant $c=c(\sigma)$. \end{lemma}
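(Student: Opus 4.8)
The plan is to use the observation recorded in the remark preceding the statement, that $0,1,\om,p$ and $q$ are all vertices of the tiling $\TTT$ of $T$ by $2^{18}$ congruent equilateral triangles of side $2^{-9}$, namely the tiling generated by the two vectors $2^{-9}$ and $2^{-9}\om$. In the induced integer coordinates one has $0=(0,0)$, $1=(2^9,0)$, $\om=(0,2^9)$, $p=(171,170)$ and $q=(172,170)$, so $p$ and $q$ are adjacent vertices of $\TTT$ and the segment $[p,q]$ is one of its edges (in the $2^{-9}$ direction). Every small triangle of $\TTT$ lies in $T$, and $T\subset\overline{D(0,1)}\subset U$, so the hypothesis $f\in\FFF_\sigma$ applies to each of them. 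First I would record the elementary consequence of this hypothesis: if $\tau$ is a triangle of $\TTT$ with vertices $a,b,c$, then $\skew(f(\tau))\le\sigma$ forces the three numbers $|f(a)-f(b)|$, $|f(b)-f(c)|$, $|f(c)-f(a)|$ to be mutually comparable, each being at most $\sigma$ times any other.

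The heart of the argument is a discrete Harnack-type propagation. Call two triangles of $\TTT$ \emph{adjacent} if they share an edge; this makes the $2^{18}$ triangles of $\TTT$ into a connected graph of diameter at most $2^{18}$. Passing from a triangle to an adjacent one across their common edge $s$, and using within-triangle comparability at both ends (each edge of either triangle is within a factor $\sigma$ of $|f(s)|$), one sees that any edge of the new triangle is within a factor $\sigma^2$ of any edge of the old one. Chaining along a path in this graph from the triangle carrying $[p,q]$ to an arbitrary triangle of $\TTT$ then yields, for every edge $[a,b]$ of $\TTT$,
$$ |f(a)-f(b)|\le \sigma^{2N}\,|f(p)-f(q)|, \qquad N\le 2^{18}, $$
where $N$ is bounded by the fixed combinatorics of $\TTT$ and therefore the factor $\sigma^{2N}$ depends only on $\sigma$.

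Finally I would pass from single edges to the diameter of $f(T)$. Each side of $T$ is a concatenation of exactly $2^9$ edges of $\TTT$, since the three sides point in the three edge-directions $1$, $\om$ and $\om-1$ of the tiling; hence the triangle inequality together with the previous display gives $|f(a)-f(b)|\le 2^9\sigma^{2N}|f(p)-f(q)|$ for every pair of vertices $a,b$ of $T$, and therefore $L(f(T))\le 2^9\sigma^{2N}\,|f(p)-f(q)|$. This is the desired inequality with $c=c(\sigma)=2^{-9}\sigma^{-2N}>0$. I expect the main obstacle to be the bookkeeping in the propagation step: one must verify carefully that comparability transfers across a shared edge (so that the per-step loss is a genuine constant power of $\sigma$) and that the chain length, and hence the final constant, is governed purely by the fixed integer $2^9$ and is independent of the particular map $f\in\FFF_\sigma$.
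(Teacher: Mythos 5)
Your proposal is correct and follows essentially the same route as the paper: tile $T$ by the $2^{18}$ equilateral triangles of side $2^{-9}$ with $[p,q]$ as an edge, propagate comparability of image--edge lengths along chains of edge-adjacent triangles (a constant power of $\sigma$ depending only on the fixed combinatorics of the tiling), and then sum along the sides of $T$ by the triangle inequality. The only differences are bookkeeping: the paper pays a single factor $\sigma$ per triangle in the chain (comparing each edge to the next shared edge) and crudely bounds the edges per side by $N=2^{18}$, giving $L(f(T))\le N\sigma^N|f(p)-f(q)|$, whereas you pay $\sigma^2$ per adjacency step but use the sharper count of $2^9$ edges per side; both yield a constant $c(\sigma)>0$.
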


\begin{proof}
Let us first consider the tiling of the plane by equilateral triangles with vertices in $\Lambda= {\mbox{$\mathbb Z$}}\oplus \omega{\mbox{$\mathbb Z$}}$. Define a chain of triangles
$(T_j)_{0\le j \le J}$ as a sequence of triangles with vertices in $\Lambda$ such that $T_j\cap T_{j+1}$ is an edge for all $j$ with $0\le j< J$.
Given two edges $(v,w)$ and $(v',w')$, we may connect them by a chain of minimal length $n\ge 0$. A simple induction argument 
implies 
\begin{equation} \label{sigman}
|f(v)-f(w)|\le  \sigma^n |f(v')-f(w')|
\end{equation}
 if $f\in{\mathcal F}_\sigma$ is defined in a neighborhood of the chain.

 Let $T$ be as defined in our hypotheses: it is tiled by $N= 2^{18}$ triangles of $2^{-9}\Lambda$,  and $[p,q]$ is an edge of this tiling. Therefore, for every
 other edge $[v,w]$, it follows that $$|f(v)-f(w)|\le \sigma^{N} |f(p)-f(q)|.$$
 But each side of $T$ is the union of less than $N$ edges of our tiling, therefore, the triangle inequality implies
 $$L(f(T))\le N\sigma^N  |f(p)-f(q)|\,.$$\end{proof}

We now prove a  geometric lemma which will be used in the proof of Lemma \ref{lem:curve}.

\begin{lemma}\label{lem:geo} Let $|z|\le 1/8$ and suppose that $|\theta_\pm  - (\pm\pi/3)| \le 1/8$.  Then the angle $\varphi$ between 
$e^{i\theta_+}-z$ and $e^{i\theta_-}-z$ which crosses the positive real axis belongs to $(\pi/3, \pi)$.\end{lemma}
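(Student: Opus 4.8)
The plan is to identify the angle explicitly as a difference of two arguments and then pin down each argument by elementary, monotone trigonometric estimates. Write $w_+ = e^{i\te_+}-z$ and $w_-=e^{i\te_-}-z$, and set $\phi_\pm = \arg w_\pm$. The first goal is to show that $w_+$ lies in the open upper half-plane and $w_-$ in the open lower half-plane, with both having strictly positive real part. Granting this, the positive real direction (argument $0$) lies strictly between $\phi_-$ and $\phi_+$, so the angle that crosses the positive real axis is exactly $\theta=\phi_+-\phi_-$, with $\phi_+\in(0,\pi/2)$ and $\phi_-\in(-\pi/2,0)$. In particular this already yields $\theta\in(0,\pi)$, giving half of the claim.

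For the sign statements I would use the hypotheses $\te_+\in[\pi/3-1/8,\,\pi/3+1/8]\subset(0,\pi/2)$ (and the reflected interval for $\te_-$) together with $|z|\le 1/8$. On this interval $\sin$ is increasing, so $\mathrm{Im}(e^{i\te_+})=\sin\te_+\ge \sin(\pi/3-1/8)>0.79$; since $|\mathrm{Im}(z)|\le|z|\le 1/8$, one gets $\mathrm{Im}(w_+)>0.79-0.125>0$, and symmetrically $\mathrm{Im}(w_-)<0$. On the same interval $\cos$ is decreasing, so $\mathrm{Re}(e^{i\te_\pm})=\cos\te_\pm\ge\cos(\pi/3+1/8)>0.38$, whence $\mathrm{Re}(w_\pm)>0.38-0.125>0$. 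This justifies the placement of $\phi_\pm$ in the two quadrants above.

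It then remains to prove the lower bound $\theta>\pi/3$, for which it suffices to show $\phi_+>\pi/6$ and $\phi_-<-\pi/6$; adding these gives $\theta=\phi_+-\phi_->\pi/3$. Because $\phi_+$ lies in the first quadrant, $\phi_+>\pi/6$ is equivalent to $\tan\phi_+=\mathrm{Im}(w_+)/\mathrm{Re}(w_+)>\tan(\pi/6)=1/\sqrt3$. I would bound the numerator below by $\sin(\pi/3-1/8)-1/8$ and the denominator above by $\cos(\pi/3-1/8)+1/8$ (the largest value of $\cos$ on the interval occurring at the left endpoint, and $-\mathrm{Re}(z)\le 1/8$), and check that the resulting ratio exceeds $0.9>1/\sqrt3$. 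The bound $\phi_-<-\pi/6$ follows by the identical computation after reflecting in the real axis.

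The only point requiring genuine care is making the numerical trigonometric estimates rigorous: one must confirm the monotonicity of $\sin$ and $\cos$ on $[\pi/3-1/8,\,\pi/3+1/8]$, which reduces to the easy inequalities $\pi/3-1/8>0$ and $\pi/3+1/8<\pi/2$, and then verify the two explicit numerical inequalities above. Since each of these holds with a comfortable margin, no single estimate is delicate, and the lemma follows.
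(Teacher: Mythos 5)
Your proof is correct and follows essentially the same route as the paper's: the upper bound $\theta<\pi$ comes from showing both vectors have positive real part, and the lower bound $\theta>\pi/3$ comes from bounding $\tan|\arg(e^{i\te_\pm}-z)|$ below by $\tan(\pi/6)$, so each vector makes an angle greater than $\pi/6$ with the real axis on its respective side. The only cosmetic difference is that you invoke monotonicity plus numerical values of $\sin$ and $\cos$ at the endpoints, whereas the paper uses the $1$-Lipschitz bounds $\sin\te_+\ge\sqrt3/2-1/8$ and $\cos\te_+\le 1/2+1/8$, which avoid evaluating trigonometric functions at non-standard points.
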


\begin{proof} We note that $\cos \theta_\pm\ge 1/2- 1/4 >1/8\ge |z|$ so that $\varphi$ is less than $\pi$.

For the other inequality, we will estimate $\tan |\arg (e^{i\theta_\pm} -z)|$ to obtain a lower bound of both angles
with the horizontal line: 
\begin{eqnarray*} 
\tan |\arg (e^{i\theta_\pm} -z)| &  \ge &  \frac{\sqrt{3}/2 - (|z|+1/8)}{1/2 + (|z|+1/8)} \ge \frac{\sqrt{3}/2 - 1/4}{1/2 + 1/4}\\
& \ge & \frac{2\sqrt{3} - 1}{3} \ge \frac{2}{3} > \tan (\pi/6).\end{eqnarray*}
Therefore $\varphi$ is at least $\pi/3$. \end{proof}

Now we demonstrate how to find the curve $\gamma'$ mentioned above.

\begin{lemma}\label{lem:curve} Under the assumptions of Proposition \ref{prop:main2}, there exists a curve
 $\gamma'$ in $T$ going through $p$ such that for all $t\in\gamma'$ we have
$$|f(t)-f(p)|\leq  \sigma \mu (1+2\sigma^3)$$
and there are two points $t_1,t_2\in \gamma'$, such that $q, t_1, t_2$ form the vertices of an equilateral triangle.  We permit the trivial triangle where we have $t_1=t_2=q$.
 \end{lemma}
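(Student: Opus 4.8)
The plan is to keep condition (1) essentially automatic and concentrate the work on condition (2). Set $R = \sigma\mu(1+2\sigma^3)$ and let $\Omega = f^{-1}(\overline{D}(f(p),R))$; since $f$ is a homeomorphism this is a closed topological disk with $p$ in its interior, and every arc $\gamma'\subset\Omega$ satisfies $|f(t)-f(p)|\le R$ for all $t\in\gamma'$, which is exactly (1). The value of $R$ is dictated by the only definite information available, that the inscribed disk $D(f(p),\mu)$ lies in $f(T)$: writing $W=f^{-1}(D(f(p),\mu))\ni p$, I would reach the points of $\gamma'$ from $W$ by a \emph{bounded} chain of equilateral triangles of the tiling $2^{-9}\La$ near $p$ and apply $\skew(f(\cdot))\le\sigma$ along that chain, just as in Lemma \ref{lem:lowerbound} but with chain length $O(1)$ rather than $N$. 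I expect the factor $\sigma(1+2\sigma^3)$ to fall out of a two- or three-triangle chain, and I would not optimise it.

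For condition (2) the natural device is the rotation $\rho(w)=q+e^{i\pi/3}(w-q)$ through $60^\circ$ about $q$, together with $\rho^{-1}$. The point is that $q,t_1,t_2$ is a positively oriented equilateral triangle exactly when $t_2=\rho(t_1)$, so a pair $t_1,t_2\in\gamma'$ of the required kind is nothing but a point of $\gamma'\cap\rho^{-1}(\gamma')$ together with its $\rho$-image; thus it suffices to force $\gamma'$ to meet its rotated copy $\rho^{-1}(\gamma')$. I would take $\gamma'$ to be a curve through $p$ whose relevant portion bounds a region $D_0\subset\Omega$ with $q\notin D_0$ but with $D_0$ subtending an angle \emph{strictly larger than} $\pi/3$ at $q$. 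This is precisely the content of Lemma \ref{lem:geo}, applied after the affine normalisation carrying the relevant circle to the unit circle and $q$ into $\{|z|\le 1/8\}$ (this is where $|p-q|=2^{-9}$ enters, so that $q$ falls inside the required $1/8$-disk). Since $D_0$ and $\rho(D_0)$ are congruent and their angular sectors at $q$ overlap, neither contains the other, and they cannot coincide (coincidence would make $D_0$ invariant under the order-six rotation group, hence surround $q$); so their boundaries cross, yielding $t_1\in\gamma'\cap\rho^{-1}(\gamma')$ and $t_2=\rho(t_1)\in\gamma'$. The degenerate alternative $t_1=t_2=q$ is the easy branch, occurring when $q\in\Omega$: then I simply take $\gamma'$ to be any arc in $\Omega$ joining $p$ to $q$.

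The step I expect to be the real obstacle is reconciling the two conditions, which pull in opposite directions at the level of scale. For the equilateral triangle to close up on $q$, the points $t_1,t_2$ must lie at distance comparable to $|p-q|=2^{-9}$ from $p$, so $\gamma'$ must reach out to that definite domain scale; yet to keep $f(\gamma')\subset\overline{D}(f(p),R)$ with $R$ only a bounded multiple of $\mu$, the arc must be reachable from $W$ in boundedly many triangle steps. Making these compatible is exactly why $p$ is a vertex of the tiling $2^{-9}\La$ and why the tolerances in Lemma \ref{lem:geo} are the specific constant $1/8$: the controlled region built from $W$ by $O(1)$ tiling triangles must be shown to be ``fat'' enough near $p$, of inradius comparable to $2^{-9}$, for $q$ to land inside the normalised $1/8$-disk and for the angle estimate to apply. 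I would treat the verification of this definite domain size, and the check that the crossing of $\gamma'$ with $\rho^{-1}(\gamma')$ is genuine (say by choosing $\gamma'$ radially monotone as seen from $q$ so that overlapping angular sectors force overlapping regions), as the two points requiring the most care.
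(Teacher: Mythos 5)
Your device for condition (2) --- the rotation $\rho$ about $q$ by $\pi/3$ and the forced crossing of $\gamma'$ with $\rho^{-1}(\gamma')$ via Lemma \ref{lem:geo} --- is exactly the paper's Step 3, and that part of your plan is sound. The fatal gap is in your mechanism for condition (1). You propose to reach the points of $\gamma'$ from $W=f^{-1}(D(f(p),\mu))$ by a bounded chain of equilateral triangles of the tiling $2^{-9}\La$ and to apply the skew condition along that chain. But the skew condition applied to lattice triangles only compares the image lengths of lattice edges \emph{with each other}; it gives no upper bound on any such edge in terms of $\mu$. In particular, bounding $|f(p)-f(q)|\le C\mu$ (which your plan would need, $[p,q]$ being a tiling edge) is essentially the content of Proposition \ref{prop:main2} itself: combined with Lemma \ref{lem:lowerbound} it instantly yields $\mu\ge cL(f(T))/C$. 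So your construction of the controlled region is circular. The same problem defeats the repair you flag as delicate: there is no way, at this stage of the argument, to show that $W$ or $\Omega$ is ``fat'' near $p$ at scale $2^{-9}$. What is known is only that $W$ is a connected set joining $p$ to $\partial T$; it may a priori be an arbitrarily thin tentacle that misses $q$ and subtends an arbitrarily small angle at $q$, and no bounded chain of lattice triangles can enlarge it without uncontrolled image growth.

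The idea your proposal is missing is how the paper manufactures a curve that simultaneously has image control in terms of $\mu$ and definite geometry in the domain. Take $p'\in\partial T$ realizing $\mu$ and set $\gamma=f^{-1}([f(p),f(p')])$: every point of this curve has image within $\mu$ of $f(p)$ \emph{by construction}, and the curve has definite domain extent because it joins $p$ to $\partial T$, hence crosses $C(p,1/4)$. Then rotate the piece $\gamma_1$ of $\gamma$ inside $D(p,1/4)$ about $p$ by $\pm\pi/3$: for each $s\in\gamma_1$ the points $p,s,R_p(s)$ form an equilateral triangle, so the skew hypothesis converts the bound $|f(s)-f(p)|\le\mu$ into $|f(R_p(s))-f(p)|\le\sigma\mu$. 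This yields a curve of angular extent exactly $2\pi/3$ at $p$ --- no fatness of $W$ is ever needed, because the rotation creates the angular spread by fiat. A further chain of at most three equilateral triangles sharing a vertex at each endpoint $a,b$ (rotations about $a$ and $b$) pushes the curve past $C(p,1/4)$ into the sectors $S_a,S_b$, which is where the factor $\sigma\mu(1+2\sigma^3)$ and the radius $1/4+\sqrt{3}\cdot 2^{-7}$ come from; only then does the crossing argument at $q$ apply. Without this segment-preimage-plus-rotation construction, your proposal has no way to produce any curve satisfying condition (1) with nontrivial domain geometry.
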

 
 \begin{proof}
 We will do this in several steps. We first define a curve that will join two points of the boundary of a disk contained in $T$ (Step 1). To make sure that we will be able to  find two points that form an equilateral triangle with $q$, we will extend this curve so that it has end points in a slightly larger disk, and is only close to the boundary of the larger disk when it is also close to its end points (Step 2). Then we will use Lemma \ref{lem:geo} to find our triangle (Step 3).

Since $\sqrt{3}\ge 8/5$, it follows that 
${\mbox{\rm dist}}(p,\partial T) = 85\sqrt{3}\cdot 2^{-9} >  1/4 + 2^{-6}$, so that
 $D(p, 1/4+2^{-6} )$ is contained in the interior of $T$.

 Throughout the proof, for $x\in {\mbox{$\mathbb C$}}$, $R_x$ will denote the rotation centered at $x$ by $\pi/3$ radians, defined by $R_x(z)= x + (z-x)\omega$ and
$\bar R_x$  the rotation centered at $x$ by $-\pi/3$ radians, defined by $\bar R_x(z)= x + (z-x)\bar \omega$.  Recall that we set $\omega=1/2+(\sqrt{3}/2)i$.
 \\ \\

{\bf Step 1: There exists a curve $\gamma_2$  that satisfies the following:
\begin{enumerate}
\item $\gamma_2\subset D(p,1/4)$,
\item $\gamma_2$ has end points on $C(p,1/4)$ which are exactly $2\pi/3$ radians apart, and 
\item for all points $t\in\gamma_2$ we have $|f(t)-f(p)|\leq \sigma\mu$.
\end{enumerate} }

Let $p'\in \partial T$ be such that $d(f(p),f(p'))=\mu$ and let $\gamma = f^{-1}([f(p),f(p')])$.
Since $D(p,1/4)$ is contained in the interior of $T$, we may consider  the component  $\gamma_1$ of $\gamma\cap D(p,1/4)$ 
that contains $p$, and we denote by $w\in C(p,1/4)$
the other end point of $\gamma_1$.  We take $w$ to be the first point of $C(p,1/4)$ encountered when moving along $\gamma$ starting from $p$.
\\ \indent Now define $$\gamma_2 = R_p(\gamma_1)\cup \bar R_p(\gamma_1)\,.$$
Note that, for every $s\in \gamma_1$, $R_p(s)$ and $\bar R_p(s)$ are two points in $\gamma_2$ which make an angle of $2\pi/3$ seen
from $p$.  
Since $f\in {\mathcal F}_\sigma$, for all $t\in\gamma_2$, we have
$$|f(t)-f(p)|\leq \sigma|f(s)-f(p)|\leq \sigma\mu$$  where  $s\in \gamma_1$  denotes a point such that either $t=R_p(s)$ or $t=\bar R_p(s)$.
\\ \\

{\bf Step 2: Let $a,b$ be the end points of $\gamma_2$. There exists a curve $\gamma_3$ such that 
\begin{enumerate}
\item $\gamma_3\subset D(p,1/4) \cup D(a,2^{-6})\cup D(b,2^{-6})$; 
\item $\gamma_3$ has both end points on $C(p,1/4+\sqrt{3}\cdot 2^{-7})$;
\item for all points $t\in\gamma_3$, $|f(t)-f(p)|\leq \sigma\mu(1+2\sigma^3).$
\end{enumerate}}

Let $D_a=D(a,2^{-6})$ and $D_b=D(b,2^{-6})$.  
Let $\gamma_{2a}$ and $\gamma_{2b}$ be the components of $\gamma_2\cap D_a$ and $\gamma_2\cap D_b$ that have end points at $a$ and $b$ respectively. 

Clearly $\gamma_{2a}$ also has an end point on the boundary of $D_a$.  Let $a'$ denote an end point of $\gamma_{2a}$ 
on the boundary of $D_a$.  Use the tangent line to $D(p,1/4)$ at $a$ to divide $D_a$ in half, and then divide each half into thirds.  
So we have divided $D_a$ into closed sectors of $\pi/3$ radians with three such sectors lying entirely outside of $D(p,1/4)$.  
Let $S_a$ denote the middle sector lying completely outside of $D(p,1/4)$.  Then there exists $n\in\{2,3\}$ such that when  
$\gamma_{2a}$ is rotated $n\pi/3$ radians in an appropriate direction about $a$, the image of $a'$ under the rotation will lie in $S_a$.  
Let the image of $\gamma_{2a}$ under this rotation be denoted by $\gamma_{3a}$.  

Now we will bound the quantity $|f(t)-f(p)|$ where $t\in\gamma_{3a}$.  Fix $t\in\gamma_{3a}$.    
Let $t_0$ be the point on $\gamma_{2a}$ whose image under the rotation is $t$.  Without loss of generality 
we will assume this rotation was clockwise.  Let $t_i$ denote the image of $t_0$ under a clockwise rotation of $i\pi/3$ radians
where $ i=1, \ldots , n$ ($t=t_n$).  Then since $a, t_{i-1},t_i$ ($1\leq i\leq n$) form an equilateral triangle, we have
$$|f(t_i)-f(a)|\leq \sigma|f(t_{i-1})-f(a)|.$$
Since $a,t_0\in\gamma_2$ we have
$$|f(a)-f(t_0)|\leq |f(a)-f(p)|+|f(p)-f(t_0)|\leq 2\sigma\mu$$
and
$$|f(a)-f(p)|\leq \sigma\mu.$$
Thus since $n$ is at most $3$ we may apply (\ref{sigman}) with $n\leq 3$, which together with the triangle inequality yields
$$|f(t)-f(p)|\leq |f(a)-f(p)|+|f(a)-f(t)|\leq \sigma\mu +\sigma^n|f(a)-f(t_0)|\leq \sigma\mu(1+2\sigma^n)\leq \sigma\mu(1+2\sigma^3).$$

\indent  Furthermore $\gamma_{3a}$ must intersect the circle $C(p,1/4+\sqrt{3}\cdot 2^{-7})$.  This is because $\gamma_{3a}$ 
has an end point in $S_a$ and therefore the distance of the end point of $\gamma_{3a}$ from $D(p,1/4)$ must be at least 
$\cos(\pi/6)\cdot 2^{-6}= \sqrt{3}\cdot 2^{-7}$.  This is depicted in Figure \ref{SACircle}.

\begin{figure}[h]
\centering
 \begin{picture}(250,400)
 \put(10,210){${\bf D_a}$}
 \put(105,152){ ${\bf S_a}$}
 \put(131,80){${\bf D(p,1/4+\sqrt{3}/2^7)}$}
 \put(19,70){${\bf D(p,1/4)}$}
\put(0,0){\includegraphics[width=2 in]{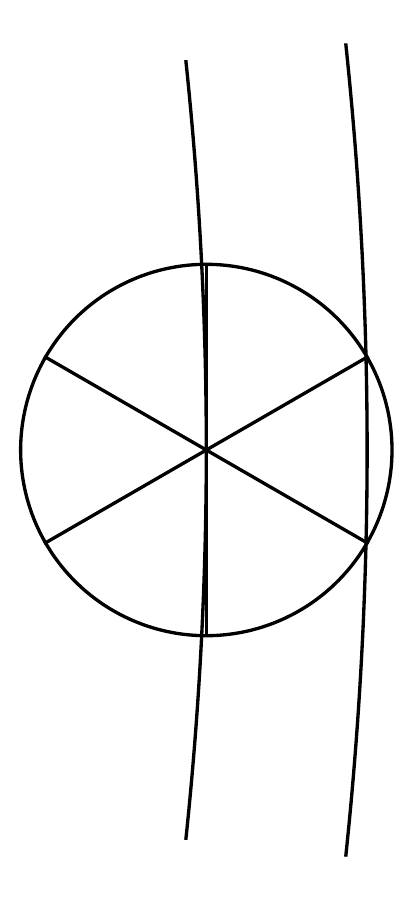}}
\end{picture}
\caption{}
\label{SACircle}
\end{figure}

We proceed similarly near  $b$ and define 
a curve $\gamma_{3b}$ contained in $D_b$ with end points at $b$ and at some point on the intersection of the boundary of 
$D_b$ and $S_b$ (defined analogously to $S_a$) such that for all $t\in\gamma_{3b}$ we have
$|f(t)-f(p)|\leq \sigma\mu(1+2\sigma^3)$; as above,  $\gamma_{3b}$  intersects the circle $C(p,1/4+\sqrt{3}\cdot 2^{-7})$. 

Let $\gamma_3$ be the connected component of $(\gamma_2\cup \gamma_{3a}\cup \gamma_{3b})\cap D(p,1/4+\sqrt{3}\cdot 2^{-7})$ which includes points in both 
$\gamma_{3a}$ and $\gamma_{3b}$.  
Then for all points $t\in\gamma_3$, $$|f(t)-f(p)|\leq \sigma\mu(1+2\sigma^3).$$ 
The curve $\gamma'$ in Lemma \ref{lem:curve} can be chosen as  $\gamma'=\gamma_3$.
\\ 
\\

{\bf Step 3: Let $q= p+2^{-9}$. There exist $t_1,t_2\in \gamma_3$ such that $\{q,t_1,t_2\}$ form an equilateral triangle.}

Let $D_q$ be the smallest disk centered at $q$ which contains $D(p,1/4)$.  Then $D_q\subset D(p, 1/4 +\sqrt{3}\cdot 2^{-7})$ since $|p-q|=2^{-9}\le \sqrt{3}\cdot2^{-8}$.  Let $\gamma_4$ be the connected component of $\gamma_3\cap D_q$ which has end points  
$A\in D_a\cap D_q$ and $B\in D_b\cap D_q$.  

Note that, if we write  $a= p+|a-p|e^{i\theta_a}$ and $A= p +|A-p| e^{i\theta_A}$, where $|\theta_A - \theta_a|$ is chosen to be as small as possible modulo $2\pi$, then $$|\theta_A - \theta_a|\le 2|a-A|/(1/4)\le2\cdot 2^{-6}/(1/4) \le 1/8$$ and similarly for $b$ and $B$. Note that $|\arg(a-p)| + |\arg (b-p)|=2\pi/3$ and $|p-q|/(1/4)= 2^{-7}\le 1/8$. Therefore, 
by Lemma \ref{lem:geo} applied in $D(p,1/4)$, the angle between $A-q$ and $B-q$
lies in $(\pi/3, \pi)$.  Hence, 
the images $A_r$ and $B_r$ of $A$ and $B$ respectively under $\bar R_q$ will separate $A$ and $B$ on $\partial D_q$.
 Thus the image  $\bar R_q(\gamma_4)$  must intersect $\gamma_4$ at a point $q'$, say.  This gives us our desired equilateral triangle since $q$, $q'$,  and the pre-image $q''$ of $q'$ form an equilateral triangle. 
 
 This completes the proof of Lemma \ref{lem:curve}.
\end{proof}

To complete the proof of Proposition \ref{prop:main2}, we note that the points $t_1$ and $t_2$ required will be the points $q'$ and $q''$. It remains to be shown that
$
 | f(q') - f(p) | \leq C\mu $ and $  | f(q'') - f(p) | \leq C\mu
$
for some constant $C$.
Since $q''\in \gamma_4\subset \gamma_3$, we have $ | f(q'') - f(p) | \leq \sigma (1 + 2 \sigma^3) \mu$ by part (3) in Step 2. But also $q'\in \gamma_4$, so that for the same reason $ | f(q') - f(p) | \leq \sigma (1 + 2 \sigma^3) \mu$. 
This completes the proof of Proposition \ref{prop:main2}.

\section{Proof of Theorem \ref{cor:1}}

We prove Theorem \ref{cor:1} by approximating $f$ by linear mappings at points where $f$ is differentiable.

\begin{proof} Let the assumptions of Theorem \ref{cor:1} be satisfied. Theorem  \ref{thm:main} implies that $f$ is locally quasiconformal and hence differentiable with $f_z \not= 0$  at almost every point of $U$. Let $z_0$ be a point of differentiability such that ${\hbox{\rm skew}}(f,z_0)\leq\sigma$.  We compute the maximum possible value for $H(z_0)$, which yields the same upper bound for the dilatation of $f$ at $z_0$.  Since $H(z_0)$ is invariant under M\"{o}bius transformations we may compose with translations, a dilation and a rotation to assume that  $z_0=f(z_0)=0$, $f_z(z_0)=1$ and $f_{\bar{z}}(z_0)=|f_{\bar{z}}(z_0)|<1$.  Then $f(z)=z+f_{\bar{z}}(z_0)\bar{z}+\epsilon(z)$ where $\epsilon(z)/|z|$ tends to $0$ as $z$ tends to $z_0$, and thus ${\hbox{\rm skew}}(f,z_0)={\hbox{\rm skew}}(\tilde{f},z_0)$ where $\tilde{f}(z)=z+f_{\bar{z}}(z_0)\bar{z}$. It now suffices to prove that for all affine mappings $\tilde{f}$ that can arise in this way, under the assumptions of Theorem \ref{cor:1}, the maximal dilatation of $\tilde{f}$ is at most $K(\sigma)$. This then shows that also the maximal dilatation of our original mapping $f$ is at most $K(\sigma)$, as required. Thus from now on we only consider $\tilde{f}$. 

We will first compute the skew of $\tilde{f}$, and then express $K(\tilde{f})$ in terms of ${\hbox{\rm skew}}(\tilde{f})$.

Note that $|\tilde{f}(a)-\tilde{f}(b)|=|\tilde{f}(a+v)-\tilde{f}(b+v)|$, $|\tilde{f}(a)-\tilde{f}(b)|=|\tilde{f}(\bar{a})-\tilde{f}(\bar{b})|$ and
$|\tilde{f}(a)-\tilde{f}(b)|/|\tilde{f}(a)-\tilde{f}(c)|=|\tilde{f}(ra)-\tilde{f}(rb)|/|\tilde{f}(ra)-\tilde{f}(rc)|$
for all $a,b,c,v\in\mathbb{C}$ with $a\neq c$ and all $r>0$.  This implies ${\hbox{\rm skew}}(\tilde{f}(T))$ where $T$ is an equilateral triangle is invariant under translations, complex conjugation and dilations of $T$.  Thus for all equilateral triangles $T$,  
$${\hbox{\rm skew}}(\tilde{f}(T))\in \left\{\frac{|\tilde{f}(z)-\tilde{f}(0)|}{|\tilde{f}(ze^{ i\pi/3})-\tilde{f}(0)|}: |z|=1\right\}.$$ 
Indeed, suppose $T$ has vertices $A$, $B$ and $C$, and ${\hbox{\rm skew}}(T)=\frac{|\tilde{f}(A)-\tilde{f}(B)|}{|\tilde{f}(A)-\tilde{f}(C)|}$.  First we translate $A$ to the origin, and then we dilate $T$ so its side lengths are equal to $1$.   If $\overline{AB}$ is $\pi/3$ radians clockwise from $\overline{AC}$, it is clear that our statement is true; otherwise we take the complex conjugate of $T$ to change the orientation of $T$ and then, since ${\hbox{\rm skew}} ( \tilde{f} (T) )$ is invariant under complex conjugation of $T$, our claim is true.

Hence we have
$${\hbox{\rm skew}}(\tilde{f})=\max\left\{ \frac{|\tilde{f}(z)-\tilde{f}(0)|}{|\tilde{f}(ze^{ i\pi/3})-\tilde{f}(0)|}: |z|=1\right\}
=\max\left\{ \frac{|\tilde{f}(z)|}{|\tilde{f}(ze^{ i\pi/3})|}: |z|=1\right\}.$$

Let $\mu=f_{\bar{z}}(z_0)$. If $\mu=0$, then ${\hbox{\rm skew}} ( \tilde{f}  ) =1$, so we  assume that $0<\mu<1$. Write 
$\nu=\mu+\mu^{-1} > 2$ and $\beta = e^{i\pi/6}$.
Let $w\in{\mbox{$\mathbb C$}}$ with $|w|=1$.
We have $$|\tilde{f}(w)|^2=|w+\mu\bar{w}|^2
=(w+\mu\bar{w} )(\bar w+\mu w) =  1+\mu^2 + \mu(w^2+\bar w^2)= \mu [\nu + (w^2+\bar w^2)]\,.$$

Now we are able to maximize $ |\tilde{f}(\beta w )|/|\tilde{f}(\bar \beta w)|$ with respect to $w$.  
Set $z=w^2$ and  $\alpha=e^{i\pi/3}$.
Since we have assumed $|w|=1$, we can instead  maximize 
$$\kappa= \left| \frac{\tilde{f}(\beta w )}{ \tilde{f}(\bar \beta w)}\right |^2=   \frac{\nu + \alpha z+\bar \alpha \bar z }{\nu + \bar \alpha z+\alpha \bar z  }\,. $$
We write $z=e^{i x}$, $x \in{\mbox{$\mathbb R$}}$,   so that $z' \equiv dz/dx = i z$, $\bar z'= -i\bar z$. We may differentiate $\kappa$ as a function of $x$. It follows
that $\kappa'=0$ if, and only if,
$$ (\alpha z-\bar \alpha \bar z )( \nu + \bar \alpha z+\alpha \bar z ) -  (\nu + \alpha z+\bar \alpha \bar z )(\bar \alpha z- \alpha \bar z )=0\,.$$
Thus
$$ \nu ( \alpha z-\bar \alpha \bar z-\bar \alpha z+\alpha \bar z)=  z^2 - \alpha^2 +  \bar \alpha ^2 - \bar z^2 -z^2 - \alpha^2 + \bar \alpha^2 +\bar z^2$$
which is equivalent to
$$\nu(z+\bar z)(\alpha-\bar \alpha)= 2(\bar \alpha -\alpha)(\bar \alpha +\alpha)\,.$$
Therefore $$\cos x =  -\frac{2}{\nu} \cos \frac{\pi}{3}=  -\frac{1}{\nu}\,.$$
It follows that $\sin^2 x = 1 -1/\nu^2$ so $\kappa'=0$ for $$z= \frac{1}{\nu}\left( -1+ i\varepsilon \sqrt{\nu^2 -1}\right)$$
with $\varepsilon \in\{\pm 1\}$.
For these values of $z$, one gets
$$\kappa =    \frac{\nu +  2 Re (\alpha z)}{\nu + 2 Re (\bar \alpha z) } =      \frac{ \nu^2 -1 - \varepsilon \sqrt{3(\nu^2-1)}}{ \nu^2 -1 + \varepsilon \sqrt{3(\nu^2-1)}}$$
which is maximal for $\varepsilon =-1$. 
So we obtain $${\hbox{\rm skew}}(\tilde{f})^2 = \frac{ \nu^2 -1 + \sqrt{3(\nu^2-1)}}{ \nu^2 -1 -\sqrt{3(\nu^2-1)}} = \frac{\sqrt{(\nu^2-1)/3} + 1}{\sqrt{(\nu^2-1)/3} - 1} > 1 \,.$$
Note that $\nu^2-1>3$ since $\nu>2$. 
Let us write $\tau= {\hbox{\rm skew}}(\tilde{f})>1$ so that
$$\sqrt{(\nu^2-1)/3}= \frac{\tau^2+1}{\tau^2 -1},\ \nu^2 = 3 \left(\frac{\tau^2+1}{\tau^2 -1}\right)^2+1 = \frac{4(\tau^4+\tau^2 +1)}{(\tau^2-1)^2}$$
and thus $$\mu + \mu^{-1} = \nu = \frac{2 \sqrt{\tau^4+\tau^2 +1}}{\tau^2-1}\,.$$
Hence $$\mu^2 - 2 \mu \frac{\sqrt{\tau^4+\tau^2 +1}}{\tau^2-1} + 1 =0\,.$$
We compute the reduced discriminant $$\Delta'= \frac{\tau^4+\tau^2 +1}{(\tau^2-1)^2} - 1=  \frac{3\tau^2}{(\tau^2-1)^2}$$
and we deduce from $0<\mu<1$ that $$\mu= \frac{\sqrt{\tau^4+\tau^2 +1} -\sqrt{3}\tau }{\tau^2-1}\,.$$

Thus $$K(\tilde{f})= \frac{1+\mu}{1-\mu}= \frac{ \tau^2-1 +\sqrt{\tau^4+\tau^2 +1} -\sqrt{3}\tau }{\tau^2-1 -\sqrt{\tau^4+\tau^2 +1} +\sqrt{3}\tau }\, = \varphi(\tau) ,$$
say.
Write $t= \sqrt{\tau^4+\tau^2+1} $ and note  that $t^2= (\tau^2-1)^2 + 3\tau^2$ so that
\begin{eqnarray*}
\varphi(\tau) & = & \frac{[(\tau^2 - 1 + t) - \sqrt{3} \tau] [(\tau^2 - 1 + t ) + \sqrt{3} \tau ]}{[(\tau^2 - 1) +  \sqrt{3} \tau] ^2 -t^2}\\
&= &  \frac{ [(\tau^2 - 1) + t]^2  - 3\tau^2}{2\sqrt{3}\tau (\tau^2 -1)}\  =  \  \frac{2[(\tau^2 - 1)^2 + t(\tau^2-1)]}{2\sqrt{3}\tau (\tau^2 -1)}\\
& =& \frac{\tau^2-1+\sqrt{\tau^4+\tau^2+1}}{\sqrt{3}\tau}\,.\end{eqnarray*}

 Differentiation shows that $\varphi(\tau)$ is an increasing function of $\tau$, so that  
 since  $\tau = {\hbox{\rm skew}}(\tilde{f}) = {\hbox{\rm skew}}(f,z_0)\leq \sigma$, 
we have
$$
K(\tilde{f}) = \varphi(\tau) \leq 
\varphi(\sigma) = \frac{\sigma^2-1+\sqrt{\sigma^4+\sigma^2+1}}{\sqrt{3}\sigma}\,.$$
Hence $K(f)\leq \varphi(\sigma)=K(\sigma)$ as defined in Theorem~\ref{cor:1}.

If $\mu\in (0,1)$ is given and if $f(z)=z+\mu \overline{z}$, then we may take $\tilde{f}=f$ and $\sigma=\tau$ in the above argument, and we see that $K(f)=K(\sigma)$. Thus the upper bound in Theorem~\ref{cor:1} is best possible.
\end{proof}

\section{An Alternative Proof of the Quasiconformality of Mappings Satisfying the Hypotheses of Theorem \ref{thm:main}} \label{ansection}

From Proposition \ref{prop:main1}, there are several ways of establishing that  a mapping $f$ satisfying the hypotheses of Theorem \ref{thm:main}
satisfies the analytic definition of quasiconformality which is equivalent to Definition \ref{metricdefn}. For $ | \xi | = 1$, we denote by $\partial_{\xi} f(z)$ the directional derivative $\lim_{t \downarrow 0} ( f(z+t \xi ) -f(z) )/( t \xi ) $. 

\begin{definition} \label{def2}
We say that a homeomorphism $f:U\to V$ is absolutely continuous on lines if for every rectangle $R=\{(x,y):a<x<b, c<y<d\}$ with $\overline{R}\subset U$, f is absolutely continuous on a.e.~ interval $I_x=\{(x,y):c<y<d\}$ and a.e.~ interval $I_y=\{(x,y): a<x<b\}$.  An orientation-preserving homeomorphism $f$ is called quasiconformal if $f$ is absolutely continuous on lines and there exists $K\geq 1$ such that $$\max_{\xi}|\partial_{\xi} f(z)|\leq K\min_{\xi}|\partial_{\xi} f(z)| \text{ a.e.}$$
\end{definition}

Proposition \ref{prop:main1} tells us that the image of every equilateral triangle, $T$, contains a disk with radius proportional to $L(f(T))$.  
In \cite[Section 4.5]{MR2245223}, Hubbard uses this to prove that the map belongs to the Sobolev space $W^{1,2}_{loc}$ by an approximation argument. 
We propose another approach which
shows directly that the map satisfies the  ACL property.

\begin{proof}
First we show that $f$ is absolutely continuous on lines.  This part of the proof parallels Pfluger's proof that a mapping satisfying the geometric definition of quasiconformality is absolutely continuous on lines.  His proof can be found in \cite{MR0101307} and is reproduced in English in \cite{MR0344463}, p.~162.  We fix a rectangle $R=\{(x,y):a<x<b,\, c<y<d\}$ and let $I_y=\{(x,y):a<x<b\}$ for $y$ between $c$ and $d$.  Define $A(y)$ to be the area in $f(R)$ beneath the image of $I_y$.  Since $A$ is an increasing function of $y$, it is differentiable almost everywhere.  We will show that $f|_{I_y}$ is absolutely continuous for all $y$ at which $A$ is differentiable; a similar argument applies to vertical line segments.  We select an arbitrary collection $\{(z_k^*,z_k)\}_{k=1}^n$ of disjoint sub-intervals of $I_y$ where $z_k=(x_k,y)$ and $z_k^*=(x_k^*,y)$.  

We consider the collection of rectangles $\{R_k\}_{k=1}^n$ where each $R_k$ has height $\delta$ and has its bottom side contained in the $k$th sub-interval.  More precisely, fix $k$ and draw a rectangle of height $\delta$ above $(z_k^*,z_k)$.  Set $N_k$ equal to the smallest integer  greater than or equal to $ \frac{|x_k^*-x_k|}{\delta}$.  We  draw $N_k$ equilateral triangles in $R_k$.  The triangles  all have one side on the interval $[z_k^*,z_k]$ and they  overlap only at their vertices.  The first $N_k-1$ triangles  have width $\delta$ and the last triangle has  width $x_k^*-x_k-\delta(N_k-1)$. 
Let $\Delta_{k,i}$ denote the side length of the $i$th triangle for $1\leq i\leq N_k$ and set $\Delta_{k,0}=0$.  Set $N=\sum_{k=1}^n N_k$.  Then 
$N\leq \frac{\sum_{k=1}^n |x_k^*-x_k|}{\delta}+n$.  

Let $\alpha$ be as in Proposition \ref{prop:main1}. By Proposition \ref{prop:main1}, the image of each of our triangles must contain a disk of radius comparable to the greatest distance between the images of the vertices of the triangle.  Thus the total area of all the images of our rectangles of height $\delta$ and width $x_k^*-x_k$ is greater than or equal to
\begin{equation}\label{rectangleinequality}
\sum_{k=1}^n\sum_{j=1}^{N_k} \pi \alpha^2 \left(    | f ( x_k+\sum_{i=1}^j \Delta_{k,i},y ) - f( x_k+\sum_{i=1}^j \Delta_{k,{i-1}},y ) |   \right)^2.
\end{equation}
Then by the Cauchy--Schwarz inequality, (\ref{rectangleinequality}) is greater than or equal to
\begin{eqnarray*}
&{}& 
\frac{\pi \alpha^2  }{N } \left(\sum_{k=1}^n\sum_{j=1}^{N_k}|f(x_k+\sum_{i=1}^j \Delta_{k,i},y)-f(x_k+\sum_{i=1}^j \Delta_{k,{i-1}},y)|\right)^2
\\
&\geq& \frac{\pi \alpha^2 }{N }\left(\sum_{k=1}^n|f(z_k^*)-f(z_k)|\right)^2 
\geq
\frac{\pi \alpha^2 }{\left(\frac{\sum_{k=1}^n |x_k^*-x_k|}{\delta}+n\right)}\left(\sum_{k=1}^n|f(z_k^*)-f(z_k)|\right)^2
\\
&=& \frac{\pi \alpha^2 }{\left(\frac{\sum_{k=1}^n |x_k^*-x_k|+\delta n}{\delta}\right)}\left(\sum_{k=1}^n|f(z_k^*)-f(z_k)|\right)^2 .
\end{eqnarray*}

Recall that $A(y)$ is defined to be the area in $f(R)$ beneath the image of the line segment $I_y$.   Since the sum of the areas of the images of our rectangles of height $\delta$ and width $x_k^*-x_k$ is less than or equal to $A(y+\delta)-A(y)$, we have
$$ \pi \alpha^2  \left(\sum_{k=1}^n|f(z_k^*)-f(z_k)|\right)^2\leq \left( \frac{ A(y+\delta)-A(y)}{\delta}\right)
\left(\sum_{k=1}^n |x_k^*-x_k|+\delta n\right).$$
Since we chose $y$ at which $A$ is differentiable, letting $\delta\to 0$  gives
$$\pi \alpha^2  \left(\sum_{k=1}^n|f(z_k^*)-f(z_k)|\right)^2\leq A'(y)\left(\sum_{k=1}^n |x_k^*-x_k|\right).$$
Since $A'(y)$ exists almost everywhere this gives absolute continuity on almost every horizontal line segment.  
The proof is analogous for vertical line segments.

To then conclude that $f$ is quasiconformal we  note that since $f$ is open and absolutely continuous on lines, $f$ is differentiable almost everywhere by a theorem of Gehring and Lehto \cite{MR2472875}. 
Now the  computations in the proof of Theorem \ref{cor:1}  show  that
 $$\max_{\xi}|\partial_{\xi} f(z)|\leq K(\sigma) \min_{\xi}|\partial_{\xi} f(z)|$$
 at points where $f$ is differentiable and hence almost everywhere. According to the analytic definition of quasiconformality (Definition~\ref{def2}), the mapping $f$ is quasiconformal.
\end{proof}

\section{Appendix by Colleen Ackermann: An Analogue of the Main Theorem in Hilbert Spaces of Dimension at Least Three}

In dimensions three and higher the proof of an analogue of Theorem \ref{thm:main} is surprisingly simpler than the proof of Theorem \ref{thm:main}.  Furthermore the proof itself gives an elegant bound on $K(\sigma)$.

\begin{theorem}
Let $\mathcal{H}_1$ and $\mathcal{H}_2$ be 
Hilbert spaces with $dim(\mathcal{H}_1)=dim(\mathcal{H}_2)\geq 3$ and let $U\subset\mathcal{H}_1, V\subset\mathcal{H}_2$ be domains.  Suppose that $f:U\to V$ is a homeomorphism and that for all closed equilateral triangles $T\subset U$, $skew(f(T))\leq \sigma$.  Then $f$ is $\sigma^3$-quasiconformal when using the metric definition of quasiconformality.
\end{theorem}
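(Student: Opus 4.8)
The plan is to verify the metric definition directly, proving the stronger pointwise bound $M(z,r) \le \sigma^3 m(z,r)$ for \emph{every} admissible $z$ and $r$ rather than merely in the limit, which immediately yields $H(z) \le \sigma^3$ throughout $U$. Fix $z \in U$ and any $r>0$ small enough that the closed ball $\overline{D}(z,r)$ lies in $U$; in particular the sphere $S(z,r)$ lies in $U$. By compactness of the sphere and continuity of $f$, choose $z_M, z_m \in S(z,r)$ realizing $M(z,r)$ and $m(z,r)$ respectively. The entire argument then reduces to comparing $|f(z)-f(z_M)|$ and $|f(z)-f(z_m)|$.

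The mechanism is a chain of equilateral triangles all sharing $z$ as one vertex. Observe that if $a, a' \in S(z,r)$ subtend an angle of exactly $\pi/3$ at $z$, then $|a-a'|=r$, so $\{z,a,a'\}$ are the vertices of an equilateral triangle $T$ of side $r$, contained in $\overline{D}(z,r)\subset U$. Since $|f(z)-f(a)|$ and $|f(z)-f(a')|$ are two of the three side lengths of $f(T)$, the skew hypothesis gives
$$|f(z)-f(a)| \le \skew(f(T))\,|f(z)-f(a')| \le \sigma\,|f(z)-f(a')|.$$
Consequently, if I can produce points $z_M = a_0, a_1, \dots, a_n = z_m$ on $S(z,r)$ with each consecutive pair subtending angle $\pi/3$ at $z$, then telescoping yields $M(z,r) \le \sigma^n m(z,r)$.

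Everything therefore hinges on the following purely geometric claim, which I expect to be the only real content and the precise point where $\dim \ge 3$ enters: any two points on $S(z,r)$ can be joined by such a chain with $n \le 3$. I would prove this by rescaling to the unit sphere and working inside a $2$-sphere $S^2$ that contains both endpoints (available exactly because the ambient dimension is at least $3$), where the problem becomes reachability under steps of fixed angular length $\pi/3$. The set reachable from a point in exactly two steps is the closed spherical cap of angular radius $2\pi/3$ about that point, since two circles of spherical radius $\pi/3$ whose centres lie at spherical distance at most $2\pi/3$ always meet on $S^2$. For the third step, given a target at angular distance $\phi \in [0,\pi]$, the circle of spherical radius $\pi/3$ about the target contains a point within angular distance $2\pi/3$ of the start (because $|\phi - \pi/3| \le 2\pi/3$ for $\phi \in [0,\pi]$), hence meets that cap; this supplies the required intermediate vertex and shows every point is reachable in at most three steps. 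If an endpoint happens to coincide with an intermediate vertex, the corresponding triangle degenerates and is simply omitted, costing only a factor $1$. The contrast with the planar case is instructive: on the circle $S^1$, steps of $\pi/3$ reach only the discrete set of angles that are integer multiples of $\pi/3$, which is precisely why Theorem \ref{thm:main} demands so much more work.

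Combining these, with $n \le 3$ I obtain $M(z,r) \le \sigma^3 m(z,r)$ for every $z$ and every admissible $r$; taking $\limsup_{r\to 0}$ gives $H(z) \le \sigma^3$ for all $z \in U$, so $f$ is $\sigma^3$-quasiconformal. The main obstacle is the three-step reachability lemma on the sphere; once it is established the remainder is a one-line telescoping estimate, and the sharpness of the exponent $3$ simply reflects that the spherical diameter $\pi$ is exactly three times the step size $\pi/3$.
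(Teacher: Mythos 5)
Your proposal is correct and takes essentially the same route as the paper's own proof: both build a chain of at most three equilateral triangles sharing the center $z$ as a common vertex, with the remaining vertices on the sphere $S(z,r)$ at angular separation $\pi/3$, and telescope the skew bound to obtain $M(z,r)\le \sigma^3 m(z,r)$. The only difference is presentational: the paper verifies the three-step reachability by an explicit coordinate construction in $\mathbb{R}^3$ (splitting into the cases where the angle to the minimizing point is at most $2\pi/3$ or not), whereas you verify the same fact via intersections of spherical circles on a $2$-sphere.
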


Note that Definition~\ref{metricdefn} can be used to define quasiconformal mappings also between Hilbert spaces, whether finite-dimensional or infinite-dimensional.
The same discussion on the locality of quasiconformality made in the introduction applies here as well and enables us  to weaken the hypothesis that
$\hbox{skew}(f(T))\le \sigma$ holds for all equilateral triangles.

\begin{proof}
Fix a point $p\in U$, a positive number $r$ with $r<{\mbox{\rm dist}}(p, \partial U)$ and points $a_1,a_2\in \partial B(p,r)$. 
 We will prove $|f(a_1)-f(p)|\leq \sigma^3 |f(p)-f(a_2)|$.  
Let $P\subset\mathcal{H}_1$ be the affine plane containing $\{p,a_1,a_2\}$ and let us fix a point $q\in\mathcal{H}_1$ so that $\overset{\rightarrow}{pq}   $ is orthogonal to $P$ and
 let us restrict ourselves to the three-dimensional space containing $\{p,a_1,a_2,q\}$ that we may identify with $\mathbb{R}^3$. For the sake
 of convenience, we will consider spherical coordinates $(r,\theta,\varphi)$ with origin $p$  so that $(r,\theta,\pi/2)$ reduces to polar coordinates in $P$
 and so that $a_1$ and $a_2$ have coordinates
 $(r, \pm \theta/2, \pi/2)$ where $\theta\in (0,\pi]$ denotes the non-oriented angle between $\overset{\rightarrow}{pa_1}   $ and  $\overset{\rightarrow}{pa_2}   $. 
 
{\bf Case 1:}
 If  $\theta \le  2\pi/3$, then we may find $\varphi\in [0,\pi/2]$ such that $2 \sin\varphi \cos (\theta/2) =1$ and we
 set $b= (r, 0, \varphi)$.  Both  triangles $T_j$ with vertices $\{p,a_j,b\}$, $j=1,2$, are equilateral by construction and they 
share a common side with end points at $b$ and $p$. 
 
 Thus
$$|f(p)-f(a_1)|\leq \sigma|f(p)-f(b)|\leq \sigma^2|f(p)-f(a_2)|.$$

{\bf Case 2:}
If  $\theta > 2 \pi/3$,  consider the equilateral triangle $T_0$ with vertices $p$, $a_1$ and $b'$ where $b'$ is the image of $a_1$ under a rotation in $P$ of angle $\pi/3$
so that the smaller angle between $b'$ and $a_2$ is less than or equal to $2\pi/3$.  Thus by Case 1 
$$|f(p)-f(b')|\leq \sigma^2 |f(p)-f(a_2)|.$$
Then since the triangle $T_0$ has sides with endpoints at $p$ and $a_1$, and $p$ and $b'$ we have
$$|f(p)-f(a_1)|\leq \sigma |f(p)-f(b')|\leq \sigma^3 |f(p)-f(a_2)|.$$
\end{proof}

\section{Acknowledgments}

We would like to thank the referee for several very helpful comments that improved the clarity of the exposition. The authors would also like to thank Jeremy Tyson for many productive discussions particularly with regard to the Appendix. We would further like to thank Kari Astala for helpful remarks related to the formulation of Theorem \ref{cor:1}.

The key point of both of our proofs of Theorem \ref{thm:main}  given here is contained in Proposition
 \ref{prop:main1},
where it is proved that the image of a triangle  contains a disk of a
definite size,
exhibiting a certain length-area estimate. This approach goes back to
Pfluger and was pushed forward
by P. Koskela and S. Rogovin who proved that the ACL property of a
homeomorphism $f$
between open sets of ${\mbox{$\mathbb R$}}^n$, $n\ge 2$, could be established from an
$L^1$-control of
    \begin{equation}\label{KRInequality}
  k_f=\liminf_{r\to
0}\left(\frac{{\mbox{\rm diam}}(f(D(x,r)))^n}{|f(D(x,r))|}\right)^{1/(n-1)}
  \end{equation}
   where $|f(D(x,r))|$ denotes the Lebesgue measure of $f(D(x,r))$; see
\cite{MR2173371} for details.
  The authors would like to thank Pekka Koskela for mentioning this
similarity.

\begin{bibdiv}
\begin{biblist}

\bib{thesis}{thesis}{
      author={Ackermann, C.},
       title={Quasiconformal mappings on planar surfaces},
        type={Ph.D. Thesis},
        date={2016},
}

\bib{MR0200442}{book}{
      author={Ahlfors, L.~V.},
       title={Lectures on quasiconformal mappings},
   publisher={D. Van Nostrand Co., Inc., Toronto, Ont.-New York-London},
        date={1966},
      review={\MR{0200442 (34 \#336)}},
}

\bib{MR2436734}{article}{
      author={Aramayona, J.},
      author={Ha{\"{\i}}ssinsky, P.},
       title={A characterisation of plane quasiconformal maps using triangles},
        date={2008},
        ISSN={0214-1493},
     journal={Publ. Mat.},
      volume={52},
      number={2},
       pages={459\ndash 471},
      review={\MR{2436734 (2009j:30041)}},
}

\bib{MR2472875}{book}{
      author={Astala, K.},
      author={Iwaniec, T.},
      author={Martin, G.},
       title={Elliptic partial differential equations and quasiconformal
  mappings in the plane},
      series={Princeton Mathematical Series},
   publisher={Princeton University Press, Princeton, NJ},
        date={2009},
      volume={48},
        ISBN={978-0-691-13777-3},
      review={\MR{2472875 (2010j:30040)}},
}

\bib{MR0124487}{article} {
    AUTHOR = {Gehring, F. W.},
    AUTHOR = {Lehto, O.},
     TITLE = {On the total differentiability of functions of a complex
              variable},
   JOURNAL = {Ann. Acad. Sci. Fenn. Ser. A I No.},
    VOLUME = {272},
      YEAR = {1959},
     PAGES = {9},
   MRCLASS = {30.47},
  MRNUMBER = {0124487},
MRREVIEWER = {A. Pfluger},
      review={\MR{0124487 (23 \#A1799)}},
}

\bib{MR2245223}{book}{
      author={Hubbard, J.~H.},
       title={Teichm\"uller theory and applications to geometry, topology, and
  dynamics. {V}ol. 1},
   publisher={Matrix Editions, Ithaca, NY},
        date={2006},
        ISBN={978-0-9715766-2-9; 0-9715766-2-9},
      review={\MR{2245223 (2008k:30055)}},
}

\bib{MR2173371}{article}{
      author={Koskela, P.},
      author={Rogovin, S.},
       title={Linear dilation and absolute continuity},
        date={2005},
     journal={Annales Academiae Scientiarum Fennicae Mathematica},
      volume={30},
       pages={385\ndash 392},
      review={\MR{2173371 (2006f:30017)}},
}

\bib{MR0344463}{book}{
      author={Lehto, O.},
      author={Virtanen, K.~I.},
       title={Quasiconformal mappings in the plane},
     edition={Second},
   publisher={Springer-Verlag, New York-Heidelberg},
        date={1973},
        note={Translated from the German by K. W. Lucas, Die Grundlehren der
  mathematischen Wissenschaften, Band 126},
      review={\MR{0344463 (49 \#9202)}},
}

\bib{MR0083024}{article}{
    AUTHOR = {Mori, A.},
     TITLE = {On quasi-conformality and pseudo-analyticity},
   JOURNAL = {Trans. Amer. Math. Soc.},
  FJOURNAL = {Transactions of the American Mathematical Society},
    VOLUME = {84},
      YEAR = {1957},
     PAGES = {56--77},
      ISSN = {0002-9947},
   MRCLASS = {30.0X},
  MRNUMBER = {0083024},
MRREVIEWER = {L. V. Ahlfors},
      review={\MR{0083024}},
}

\bib{MR0101307}{article}{
      author={Pfluger, A.},
       title={\"{U}ber die \"{A}quivalenz der geometrischen und der
  analytischen {D}efinition quasikonformer {A}bbildungen},
        date={1959},
        ISSN={0010-2571},
     journal={Comment. Math. Helv.},
      volume={33},
       pages={23\ndash 33},
      review={\MR{0101307}},
}

\end{biblist}
\end{bibdiv}

\end{document}